\long\def\beginpgfgraphicnamed#1#2\endpgfgraphicnamed{\includegraphics{#1}}
\def\operator@font{\sf}
\newtheorem{thm}{Theorem}
\newtheorem*{MaximazingThm}{Leinster's Maximizing Theorem}
\newtheorem*{PosDefThm}{The Positive Definite Subset Bound}
\newtheorem*{cor}{Corollary}
\theoremstyle{definition}
\newcommand{\defn}{\textbf}
\newcommand{\R}{\mathbb{R}}
\newcommand{\CC}{\mathcal{C}}
\DeclareMathOperator{\diam}{diam}
\DeclareMathOperator{\arctanh}{arctanh}
\newcommand{\dd}{\mathrm{d}}
\DeclareMathOperator{\RMSSymbol}{E}
\newcommand{\RMS}{\RMSSymbol_0}
\newcommand{\qRMS}{\RMSSymbol_q}
\newcommand{\RMSq}[1]{\RMSSymbol_{#1}}
\DeclareMathOperator{\LeinCobbSymbol}{D}
\newcommand{\LeinCobb}[3]{{}^{#1}\!\LeinCobbSymbol^{#2}(#3)}
\DeclareMathOperator{\HillSymbol}{Hill}
\newcommand{\Hill}[2]{{}^{#1}\!\HillSymbol({#2})}
\DeclareMathOperator{\Edim}{dim_0}
\DeclareMathOperator{\Vol}{vol}
\DeclareMathOperator{\Area}{area}
\DeclareMathOperator{\TSC}{tsc}
\title{Spread: a measure of the size of metric spaces}
\author{Simon Willerton}
\date{} 
\begin{document}

\maketitle

\begin{abstract}
Motivated by Leinster-Cobbold measures of biodiversity, the notion of the spread of a finite metric space is introduced.  This is related to Leinster's magnitude of a metric space.  Spread is generalized to infinite metric spaces equipped with a measure and is calculated for spheres and straight lines.  For Riemannian manifolds the spread is related to the volume and total scalar curvature.  A notion of scale-dependent dimension is introduced and seen, numerically, to be close to the Hausdorff dimension for approximations to certain fractals.
\end{abstract}
\tableofcontents

\section*{Introduction}

Given a finite metric space $X$ with metric $\dd$ we define the \defn{spread} %
$\RMS(X)$ by
  \[\RMS(X):=\sum_{x\in X}\frac{1}{\sum\limits_{x'\in X}e^{-\dd(x,x')}}.\]
This is supposed to be a measure of the size of the finite metric space $X$.  If $X$ has very small distances between all of the points then $X$ looks like a single point and the spread is roughly equal to one.  If $X$ has very large distances  between all of the points then $X$ looks like a collection of very separate points and the spread is roughly equal to the number of points.  In general, of course, a metric space lies between these two extremes and the spread is a measure of how much between these two extremes it is.

The purpose of this paper is to demonstrate some of the basic properties of the spread, to explain the motivation behind its definition and to show how it is connected to other bits of mathematics.  Actually, the spread is one of a family  of metric space `sizes' as we will see in Section~\ref{Section:LeinsterCobbold} where biodiversity motivation is given.
The definition generalizes easily from finite metric spaces to arbitrary metric spaces with a measure, as will be seen in Section~\ref{Section:MeasureSpaces}.

One of the things that we will be interested in is how this measure of size alters as the metric is scaled, so we need to define some notation.  For $t>0$ let $tX$ denote the metric space $X$ with the metric $\dd$ scaled up by a factor of $t$, so that the distance in $tX$ between $x$ and $x'$ is $t\dd(x,x')$.  We can consider the \defn{spread profile} of the space $X$ which is just the graph of $\RMS(tX)$ for $t>0$.

\begin{figure}
\begin{center}   
\beginpgfgraphicnamed{tRspace}
$tR:=\quad
\begin{tikzpicture}[node distance = 4cm, auto,yscale=1.0,baseline=-3
]
 \node [point] (one) at (0,0) {};
    \node [point] (two) at (4.5,0.4) {};
    \node [point] (three) at (4.5,-0.4) {};
  \path [line,<->] (one) -- node[above,pos=0.5] {$1000t$}  (two);
  \path [line,<->] (one) -- node[below,pos=0.5] {$1000t$}  (three);
  \path [line,<->] (three) -- node[right,pos=0.5] {$t$}  (two);
\end{tikzpicture}$
\endpgfgraphicnamed
\\
\beginpgfgraphicnamed{tRspreadprofile}
\begin{tikzpicture}
\begin{axis}[
width = 0.49\textwidth,
axis x line=bottom, axis y line = left,
xmin=-5.2,ymin=0, ymax=3.2, xmax=3,
xtick={-4,-2,...,2}, xticklabels={0.0001,0.01,$1$ ,100},
ytick={0,1,2,3}, yticklabels={0,1,2,3},
x axis line style={style = -},y axis line style={style = -},
xlabel=$t$,
yscale=0.7,
legend style={at={(1,0.1)},anchor=south east}
]
\addplot[mark=none] file {BlockOfDotsMagThreePoints.dat};
\addlegendentry{$\RMS(tR)$}
\end{axis}
\end{tikzpicture}
\endpgfgraphicnamed
\end{center}
\caption{Spread profile of the three-point space~$R$.}
\label{Figure:ThreePointProfile}
\end{figure}
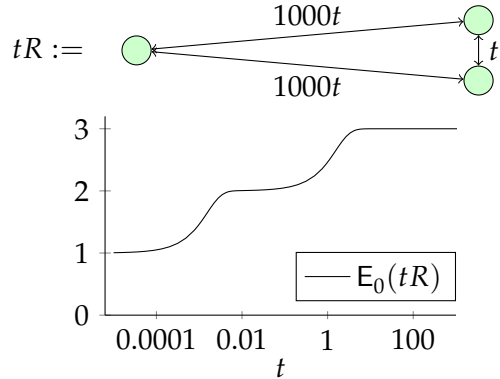

An example of a profile is given in 
Figure~\ref{Figure:ThreePointProfile}.  We consider the space $tR$, for ${t>0}$, having three points, two of which are a distance $t$ apart and are both a distance $1000t$ from the third point.  This family of metric spaces can be thought of as having three `regimes': where $t$ is very small and there looks likes there is one point; where $t$ is smallish and there looks like there are two points; and where $t$ is very large and it looks like there are three points.  This way of thinking is reflected in the values plotted.  So we wish to think of the spread $\RMS$ as akin to an `effective number of points'.

The basic properties of spread, given in the next theorem, all follow easily from the definition.

\begin{thm}
  For $X$ a finite metric space with $N$ points, the spread has the following properties:
  \begin{itemize}
    \item $1\le \RMS(X)\le N$;
    \item  $\RMS(tX)$ is increasing in $t$;
    \item $\RMS(tX)\to 1 $ as $t\to 0$;
    \item $\RMS(tX)\to N $ as $t\to \infty$;
    \item $\RMS(X)\le e^{\diam(X)}$.
  \end{itemize}
\end{thm}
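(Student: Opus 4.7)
The plan is to unify the five bullets through the auxiliary quantity
\[
  D_x(t) \;:=\; \sum_{x'\in X} e^{-t\,\dd(x,x')},
\]
so that $\RMS(tX)=\sum_{x\in X} D_x(t)^{-1}$. Every bullet then reduces to a pointwise statement about $D_x(t)$ which I sum over the $N$ points.

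For the bounds $1\le \RMS(X)\le N$, the term $x'=x$ contributes $1$ to $D_x(1)$, while the remaining $N-1$ terms lie in $(0,1]$. Hence $1\le D_x(1)\le N$, and summing the reciprocals over the $N$ points gives the claim. The limits as $t\to 0$ and $t\to\infty$ come from the same pointwise analysis: in the first case every exponential tends to $1$ so $D_x(t)\to N$ and $\RMS(tX)\to 1$; in the second case the terms with $x'\ne x$ vanish (using that distinct points are at strictly positive distance) so $D_x(t)\to 1$ and $\RMS(tX)\to N$. The diameter bound is equally direct: since $\dd(x,x')\le \diam(X)$ for all $x,x'$, we get $D_x(1)\ge N e^{-\diam(X)}$, hence $D_x(1)^{-1}\le e^{\diam(X)}/N$, and summing over $x$ produces $\RMS(X)\le e^{\diam(X)}$.

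The only bullet that is not purely a pointwise comparison is the monotonicity of $\RMS(tX)$ in $t$, which I would handle by differentiating each summand:
\[
  \frac{\dd}{\dd t}\,D_x(t)^{-1} \;=\; \frac{\sum_{x'} \dd(x,x')\,e^{-t\dd(x,x')}}{D_x(t)^{2}} \;\ge\; 0,
\]
since all distances are non-negative. Thus every summand of $\RMS(tX)$ is non-decreasing, and so is the sum. I do not foresee any serious obstacle; this derivative computation is the only place where we need more than a one-line inequality on a single summand, and even there the non-negativity is manifest term by term.
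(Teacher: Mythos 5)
Your proof is correct and is exactly the routine verification the paper has in mind: the paper gives no argument beyond the remark that ``these results all follow easily from the definition,'' and your pointwise analysis of $\sum_{x'}e^{-t\dd(x,x')}$, together with the term-by-term derivative for monotonicity, is the standard way to carry that out. No gaps.
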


In this paper we consider further properties of the spread which are summarized in the following synopsis.

In the first section we show how the spread can be thought of as the analogue of the number of species in an ecosystem.  We recall Leinster and Cobbold's diversity measures~\cite{LeinsterCobbold:Diversity} and show how that gives rise to the spread $\RMS$ of a metric space as the order-zero diversity of the metric space equipped with the uniform probability distribution.  We also see that there is a spread $\qRMS$ of order $q$ for all $0\le q \le \infty$ and relate these to generalized means.

In the second section we show how the spread relates to Leinster's magnitude~\cite{Leinster:Magnitude} and how the spread can be thought of as being better behaved.  We show that if the space has a positive definite `similarity matrix' then the magnitude is an upper bound for the spread, and if the space is homogeneous then the magnitude is equal to the spread.  We then go on to consider a space with no magnitude, or rather, a space whose magnitude profile is discontinuous and see that the spread profile is very similar, but much better behaved.  Finally in this section we see that two spaces with the same magnitude profile can have different spread profiles.

In the third section we generalize the definition of spread to non-finite metric spaces with a measure, calculate the spread of a straight line interval and relate the asymptotic behaviour of the spread to the volume and total scalar curvature of Riemannian manifolds.  Explicitly, we calculate the spread of $L_\ell$ the straight line interval of length $\ell$ with the usual Lebesgue measure and show that for large $\ell$ the spread is approximately $\ell/2+\ln 2$.  Then we consider the spread of compact Riemannian manifolds, giving the spread of the $n$-sphere explicitly, and we show that asymptotically, as the manifold is scaled up, the leading order terms in the spread are determined by the volume and the total scalar curvature of the manifold.

In the final section we consider the growth rate of the spread, which can be viewed as a kind of (scale dependent) dimension of the space, and we numerically compare this to the Haussdorff dimension for some fractals  Here is some idea of what we mean by scale dependent dimension: if millions of points are formed into the shape of a square, then at small scales it will look like a point, i.e.~zero dimensional, at medium scales it will look two dimensional, and at very large scales it will look like a collection of isolated points, i.e.~zero dimensional, again.  We look at numerical calculations for some simple approximations to fractals and see that at the medium scales the `spread dimension' is related to the Hausdorff dimension of the fractal.

\section{Connection to  Leinster-Cobbold diversity}
\label{Section:LeinsterCobbold}
In this section we recall the notion of Leinster-Cobbold diversity measures and show how this gives rise to the spread of a metric space.  We also see how generalized notions of spread relate to generalized means.
\subsection{Definition of the diversity measure}
In~\cite{LeinsterCobbold:Diversity} Leinster and Cobbold defined certain `diversity measures'.  These are numbers associated to any finite set equipped with a probability distribution and a `similarity matrix' --- we will see that a metric on a finite set gives rise to a similarity matrix in a canonical way.  These numbers are supposed to measure the biodiversity of a community where the points represent the different species, the similarity matrix represents the similarity between the species (a metric represents  distance  between the species) and the probability distribution represents the relative abundances of the species.

Before defining the diversity measures we need to define the notion of a similarity matrix.  If $X$ is a finite set with $N$ points $\{x_1,\dots,x_N\}$ then a \defn{similarity matrix} $Z$ is an $N\times N$ matrix with $0\le Z_{ij}\le 1$ and $Z_{ii}=1$.  If $Z_{ij}=0$ then this represents $x_i$ and $x_j$ being completely dissimilar and if $Z_{ij}=1$ then this represents $x_i$ and $x_j$ being completely identical.  A metric $\dd$ on $X$ gives rise to a similarity matrix $Z$ by setting $Z_{ij}:=\exp(-\dd(x_i,x_j))$, so that nearby points are considered very similar and far away points dissimilar.

Given a finite set $X$ with $N$ points $\{x_1,\dots,x_N\}$ equipped with  a probability distribution $\mathbf{p}=\{p_1,\dots,p_N\}$, so that $\sum_i p_i=1$, and a similarity matrix, $Z$, for $q\in [0,\infty]$, define the \defn{Leinster-Cobbold diversity of order $q$} by
  \[
  \LeinCobb{q}{Z}{\mathbf{p}}:=
  \begin{cases}
    \displaystyle\biggl( \sum_{i:p_i>0} p_i(Z\mathbf{p})^{q-1}_i\biggr)^{\frac{1}{1-q}}
    &q\ne 1,\\[2em]
    \displaystyle\prod_{i:p_i>0} (Z\mathbf{p})_i^{-p_i}
    &q= 1,\\[2em]
    \displaystyle \min_{i:p_i>0} \frac{1}{(Z\mathbf{p})_i}
    &q=\infty.
  \end{cases}
  \]

For fixed $X$, $\mathbf{p}$ and $Z$, the graph of $ \LeinCobb{q}{Z}{\mathbf{p}}$ against $q$ is known as the \defn{diversity profile}.  As a function of $q$, $ \LeinCobb{q}{Z}{\mathbf{p}}$ is monotonically decreasing.  We also have $ 1\le \LeinCobb{q}{Z}{\mathbf{p}}\le N$ and the order $q$ diversity can be thought of as an `effective number of species'.

The Leinster-Cobbold diversity measures generalize a classic family of diversity measures known as the Hill numbers~\cite{Hill:DiversityAndEvenness}.
The Hill number of order $q$, $\Hill{q}{\mathbf{p}}$, for $q\in [0,\infty]$ is defined for a  finite set $X$ with probability distribution  $\mathbf{p}$ on it, so it requires no metric or similarity matrix.   This Hill number can be obtained as the Leinster-Cobbold diversity of the identity similarity matrix, or, equivalenty, of the `discrete' metric where all of the points are infinitely far apart from each other; so all species are considered to be completely dissimilar.  Symbolically, we have
   \[\Hill{q}{\mathbf{p}}=\LeinCobb{q}{I}{\mathbf{p}} .\]
The Hill numbers at the values $q=0,1,2,\infty$ give, respectively, the following classical diversity measures: the number of species, the exponential Shannon index, the Simpson index and the reciprocal Berger-Parker diversity.

We can think of this specialization to Hill numbers as using the Leinster-Cobbold diversity measure to get measures of a finite probability space by equipping the space with a canonical metric, namely the discrete metric.  On the other hand we could use the Leinster-Cobbold diversity measures to get measures of the size of a finite \emph{metric} space by equipping the space with a canonical \emph{probability distribution}, namely the uniform distribution.
This gives rise to the spread.  Symbolically, for a metric space $X$ with $N$ points, define the \defn{$q$-spread} $\qRMS(X)$, for $0\le q\le \infty$, by 
  \[\qRMS(X,\dd):=\LeinCobb{q}{\exp(-\dd)}{(\tfrac1N,\dots,\tfrac1N) }.\]
Explicit formulas are given in Table~\ref{Table:FormulasForEq}.  By the monotonically decreasing nature of the Leinster-Cobbold diversity measures we have that $q\le q'$ implies that $ \qRMS(X)\ge \RMSq{q'}(X)$.  In this paper we have generally concentrated on the greatest of the these values, $\RMS(X)$ which we just call the spread; this is the analogue of the `number of species' in an ecosystem.

\begin{table}[th]
\[\qRMS(X)=\begin{cases}
\displaystyle\Biggl( \frac{1}{N^q}\sum_{i=1}^N \Biggl( \frac{1}{\sum_{j=1}^N Z_{ij}}\Biggr)^{1-q}\Biggr)^{\frac{1}{1-q}}&q\ne 1,\infty,\\[2em]
\displaystyle \sum_{i=1}^N \frac{1}{\sum_{j=1}^N Z_{ij}}&q=0,\\[2em]
\displaystyle N\cdot\prod_{i=1}^N \biggl( \frac{1}{\sum_{j=1}^N Z_{ij}}\biggr)^{1/N}
&q=1, \\[2em]
\displaystyle\frac{N^2}{\sum_{i,j=1}^N Z_{ij}}& q=2,\\[2em]
\displaystyle\min_{i=1,\dots, N} \biggl( \frac{N}{\sum_{j=1}^N Z_{ij}}\biggr)& q=\infty.
\end{cases}\]
\caption{Writing $Z_{ij}:=\exp(-d(x_i,x_j))$, we have these explicit formulas and special cases of the $q$-spread.}
\label{Table:FormulasForEq}
\end{table}

\subsection{Generalized means and reciprocal mean similarity}
Fundamental to the definition of the Leinster-Cobbold diversity measures is the idea of generalized mean~\cite{LeinsterCobbold:Diversity}.  Here we give a description of the $q$-spread in those terms.

Suppose that $X$ is a finite metric space with $N$ points $\{x_1,\dots,x_N\}$, then each point $x_i$ has a \defn{reciprocal mean similarity}  denoted by  $\rho_i$ and defined, as the name suggests, as follows:
  \[\rho_i := \frac{N}{\sum_{j=1}^N e^{-d(x_i,x_j)}}.\]
We have $1\le \rho_i\le N$ and think of the reciprocal mean similarity as being a measure of how different the space is from the point $x_i$, with $\rho_i$ being nearly $1$ if all the points are close to $x_i$ and nearly $N$ if all of the points are far from $x_i$.

In order to get a measure of the whole space we can take an average of these reciprocal mean similarities.  There are many different averages we could take.  For a set of numbers $\mathbf{a}:=\{a_1,\dots, a_N\}$ and a number $s\in \R\cup\{\pm \infty\}$, the $s$-mean $\mu^s(\mathbf{a})$ is defined, when $s\ne 0, \pm \infty$ as
  \[\mu^s(\mathbf{a}):=\biggl( \frac{1}{N}\sum_{i=1}^N a_i^s\biggr)^{1/s},\]
and as a limit when $s= 0, \pm \infty$.
This includes many standard means: $\mu^\infty$ is the maximum, $\mu^2$ is the quadratic mean, $\mu^1$ is the arithmetic mean, $\mu^0$ is the geometric mean, $\mu^{-1}$ is the harmonic mean, and $\mu^{-\infty}$ is the minimum.  These have various nice properties, but the interesting one to note here is that if $s_1>s_2$ then $\mu^{s_1}(\mathbf{a})\ge \mu^{s_2}(\mathbf{a})$ with equality if and only if all of the numbers in $\mathbf{a}$ are equal.

For%
\footnote{The $q$-spread can also be defined for negative $q$, but the properties are slightly different and we do not consider that case here.}
$q\in[0,\infty]$, the $q$-spread $\qRMS(X)$ of the metric space $X$ is by definition the $(1-q)$-mean of the individual reciprocal mean similarities:
  \[\qRMS(X):=\mu^{1-q}(\boldsymbol{\rho}).\]
We have $1\le \qRMS(X)\le N$ with $\qRMS(X)$ being near to $1$ if all of the points are close to each other and $\qRMS(X)$ being near to $N$ if all of the points are far away from each other.

\section{Comparison with magnitude}
In this section we recall Leinster's notion of magnitude and show how it relates to the spread.  We look at examples of a metric space with no magnitude and two metric spaces with the same magnitude.
\subsection{Recap on magnitude}
Magnitude was introduced by Leinster in~\cite{Leinster:Magnitude}.  It is defined for `most' metric spaces in the following way.  For $X$ a metric space a \defn{weighting} on $X$ consists of a \defn{weight} $w_x\in \R$ for each $x\in X$ such that
  \[\sum_{x\in X} w_x e^{-d(x,y)} =1 \qquad \text{for all }y\in X.\]
If a weighting exist then $|X|$ the \defn{magnitude} of $X$ is defined to be the sum of the weights:
  \[|X|:=\sum_{x\in X} w_x.\]
If it exists then the magnitude is independent of any choice in the weighting.  The definition of magnitude comes from enriched category theory, although it had previously appeared in the biodiversity literature~\cite{SolowPolasky:MeasuringBiologicalDiversity}.  For an example of a space without a magnitude see Section~\ref{Section:NaughtyExample} below.  There are large classes of spaces for which the magnitude is known to exist: one class of spaces on which it is defined is the class of `positive definite spaces'.  A \defn{positive definite} finite metric space is a finite metric space for which the similarity matrix $Z$ is positive definite.  Examples of positive definite spaces include subspaces of Euclidean space.  One nice property of the magnitude of positive definite spaces is the following.
\begin{PosDefThm} [{\cite[Corollary~2.4.4]{Leinster:Magnitude}}]
If $X$ is a positive definite space then $|X|$ is well defined, furthermore if $B\subset X$ then $B$ is also positive definite and $|B|\le |X|$.
\end{PosDefThm}
The magnitude is related to the Leinster-Cobbold diversity via the `maximum diversity' $|X|_{+}$.   Before defining that we say that a space $X$ has a \defn{non-negative weighting} if there is a weighting for $X$ in which all of the weights are non-negative.  The \defn{maximum diversity} is defined to be the maximum of the magnitudes of subsets of $X$ with a non-negative weighting:
  \[|X|_{+}:=\max_{B\subseteq X~\text{non-neg}} |B|.\]
For instance, from the Positive Definite Subset Bound it follows that if $X$ is positive definite with non-negative weighting then $|X|_{+}=|X|$.  The connection with diversity is given by the following theorem.
\begin{MaximazingThm}[{\cite[Theorem~3.1]{Leinster:MaximumEntropy}}]
For $X$ a metric space and for any $q\in [0,\infty]$ the maximum value of the Leinster-Cobbold diversity of order $q$, over all probability distributions on $X$, is given by the maximum diversity:
  \[\sup_{\mathbf{p}}\LeinCobb{q}{Z}{\mathbf{p}}=|X|_{+}.\]
\end{MaximazingThm}
This explains the name.  The maximum diversity is certainly in some sense much better behaved than the magnitude, however it is considerably harder to calculate in general.

\subsection{Comparing spread with magnitude}
We can now look at some basic comparisons.

\begin{thm}  Suppose that $X$ is a finite metric space.
\begin{enumerate}
\item The spread of $X$ is bounded-above by the maximum diversity of $X$:
 \[\RMS(X)\le |X|_{+}.\]
\item If $X$ is positive definite then its maximum diversity is bounded above by its magnitude, and thus so is its spread:
  \[\RMS(X)\le |X|_{+}\le |X|.\]
\end{enumerate}
\end{thm}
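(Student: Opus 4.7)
\medskip
\noindent\textbf{Proof plan.} Both parts follow almost immediately from the two named results recalled just above the statement, so the plan is mainly to assemble them in the right order; there is no serious obstacle.

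For part~(1), I would unfold the definition of the spread. By construction, $\RMS(X)$ is the order-zero Leinster–Cobbold diversity of $X$ equipped with the \emph{uniform} probability distribution $\mathbf{u}=(1/N,\dots,1/N)$ and the similarity matrix $Z_{ij}=e^{-\dd(x_i,x_j)}$; that is,
\[
\RMS(X)=\LeinCobb{0}{Z}{\mathbf{u}}.
\]
Applying Leinster's Maximizing Theorem at $q=0$ gives
\[
\RMS(X)=\LeinCobb{0}{Z}{\mathbf{u}}\le \sup_{\mathbf{p}}\LeinCobb{0}{Z}{\mathbf{p}}=|X|_{+},
\]
which is exactly the claimed inequality.

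For part~(2), assume $X$ is positive definite. By the Positive Definite Subset Bound, every subset $B\subseteq X$ is again positive definite, so its magnitude $|B|$ is well-defined and satisfies $|B|\le |X|$. In particular this holds for each subset $B$ that admits a non-negative weighting, so taking the maximum over such subsets gives
\[
|X|_{+}=\max_{B\subseteq X\text{ non-neg}} |B|\;\le\; |X|.
\]
Combining this with part~(1) yields the chain $\RMS(X)\le |X|_{+}\le |X|$.

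\medskip
\noindent\textbf{Main obstacle.} There is essentially none: once the spread is identified as a specific value of Leinster–Cobbold diversity, part~(1) is a one-line application of the Maximizing Theorem, and part~(2) is a one-line application of the Positive Definite Subset Bound to the defining maximum of $|X|_{+}$. The only thing worth being careful about is confirming that each subset $B$ appearing in the definition of $|X|_{+}$ inherits positive definiteness (so that $|B|$ is defined at all), which is precisely what the Subset Bound supplies.
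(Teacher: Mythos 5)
Your proposal is correct and follows essentially the same route as the paper: part~(1) is the identification of $\RMS(X)$ as the order-zero Leinster--Cobbold diversity at the uniform distribution followed by the Maximizing Theorem, and part~(2) applies the Positive Definite Subset Bound to the subset(s) realizing $|X|_{+}$. Your remark that the Subset Bound is also what guarantees each such $|B|$ is defined is a slightly more careful phrasing of the same argument.
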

\begin{proof}
  \begin{enumerate}
  \item This follows from immediately Leinster's Maximizing Theorem and the interpretation of $\RMS(X)$ as the order-zero Leinster-Cobbold diversity of $X$ with the uniform probability distribution.  
  \item \label{Part:two}
  By definition, the maximum diversity of $X$ is the magnitude of a subset $B$ of $X$, so by the Positive Definite Subset Bound, if $X$ is positive definite then $|X|_{+}:=|B|\le |X|$.
  \end{enumerate}
\end{proof}

Note that the positive definite condition in part~\ref{Part:two} of the above theorem cannot simply be removed as we will see in Section~\ref{Section:NaughtyExample} that there is non-positive definite space with magnitude smaller than spread.

We can show that the spread is actually equal to the magnitude in the special case of a homogeneous metric space.  Recall that a homogeneous space is a space in which the points are all indistinguishable, or, more precisely, a homogeneous metric space is a space with a transitive action by a group of isometries.
\begin{thm} If $X$ is a homogeneous finite metric space then the magnitude and the spread coincide:
 \[\RMS(X)=|X|.\]
More generally, the magnitude is equal to the $q$-spread for all $q\in [0,\infty]$:
 \[\qRMS(X)=|X|.\]
\end{thm}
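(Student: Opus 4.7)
The plan is to exploit homogeneity to reduce everything to a single common value. Let $X = \{x_1, \ldots, x_N\}$ and write $Z_{ij} = e^{-d(x_i,x_j)}$. The first step is to observe that if a group of isometries acts transitively on $X$, then for every $i$ the multiset $\{d(x_i, x_j) : j = 1,\ldots, N\}$ is the same (it is the orbit of $x_i$ under the stabilizers of any reference point, viewed through distances). Consequently the row sums
\[
S_i := \sum_{j=1}^N Z_{ij}
\]
are all equal to a common value $S$, and hence every reciprocal mean similarity satisfies $\rho_i = N/S$.

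The second step handles the spread. Since all the $\rho_i$ agree, any generalized mean of $\boldsymbol{\rho}$ is just the common value: for each $q \in [0,\infty]$,
\[
\qRMS(X) = \mu^{1-q}(\boldsymbol{\rho}) = N/S.
\]
This uses only the definition of the generalized means and is immediate from $\rho_1 = \cdots = \rho_N$; the $q$-independence is automatic.

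The third step computes the magnitude by guessing a uniform weighting $w_{x_i} = w$ for all $i$. Plugging into the weighting equation gives $\sum_i w\, Z_{ij} = w S = 1$ for every $j$ (the row/column sum is $S$ in either direction by symmetry of $Z$, or alternatively use that homogeneity makes column sums equal as well), so $w = 1/S$ gives a valid weighting. Therefore $|X| = \sum_i w = N/S$, matching the computation of $\qRMS(X)$. Since the magnitude, when it exists, is independent of the choice of weighting, we conclude $\qRMS(X) = |X|$ for all $q \in [0,\infty]$.

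The only mildly subtle point is justifying that the row sums $S_i$ are equal; the rest is direct substitution. Transitivity gives, for each $i,k$, an isometry $\sigma$ with $\sigma(x_i) = x_k$, and then $\{d(x_k, x_j)\}_j = \{d(\sigma(x_i), \sigma(x_{j'}))\}_{j'} = \{d(x_i, x_{j'})\}_{j'}$ as multisets, so $S_i = S_k$. Once this is in hand, everything else follows without calculation.
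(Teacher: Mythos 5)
Your argument is correct and follows essentially the same route as the paper: homogeneity forces all the reciprocal mean similarities to equal a common value $N/S$, the $(1-q)$-mean of equal numbers is that number, and the magnitude is also $N/S$. The only difference is that where the paper cites Speyer's formula for the magnitude of a homogeneous space, you re-derive it on the spot by checking that the uniform weighting $w = 1/S$ satisfies the weighting equation, which makes your version self-contained but is the same underlying computation.
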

\begin{proof}
If $X$ has $N$ points, then the Speyer's Formula~\cite[Theorem~1]{LeinsterWillerton:AsymptoticMagnitude} for the magnitude of a homogeneous space, we have for any $x\in X$ that
 \[|X|=\frac{N}{\sum_{x'\in X} e^{-\dd(x,x')}}.\]
On the other hand, every point in $X$ has the same mean reciprocal similarity $\rho$, with 
\[\rho=\frac{N}{\sum_{x'\in X} e^{-\dd(x,x')}}.\]
The $q$-spread  $\qRMS(X)$ is just the $(1-q)$-mean of the set of mean reciprocal similarities.  But the $(1-q)$-mean of $N$ copies of $\rho$ is just $\rho$, thus 
 \[\qRMS(X)=\rho=|X|,\]
 as required.
\end{proof}
There is slightly interesting notational coincidence when $Z$ is invertible:
  \[
  \RMS(X)=\sum_{i=1}^N  \biggl({\sum_{j=1}^N Z_{ij}}\biggr)^{-1};\quad 
  |X|=\sum_{i=1}^N  {\sum_{j=1}^N \left(Z^{-1}\right)_{ij}}~.
  \]

\subsection{A space with no magnitude}
\label{Section:NaughtyExample}
Here we look at an example, given by Leinster in~\cite{Leinster:Magnitude}, which has a discontinuity in its magnitude profile and look at its continuous spread profile.  Consider the five-point space $K_{3,2}$ illustrated in Figure~\ref{Figure:BadMagnitude}, equipped with the metric induced by the pictured graph, so that points on the same side are a distance $2$ from each other and points on opposite sides are a distance $1$ from each other.  As we scale this space, considering $tK_{3,2}$ for $t>0$, we see that when $t=\ln(2^{1/2})$ this has no magnitude.  However, the spread $\RMS( tK_{3,2})$ is defined for all values of $t>0$, and this seems to be a well-behaved version of the magnitude.  This example also shows that magnitude is not always an upper bound for the spread. 
%
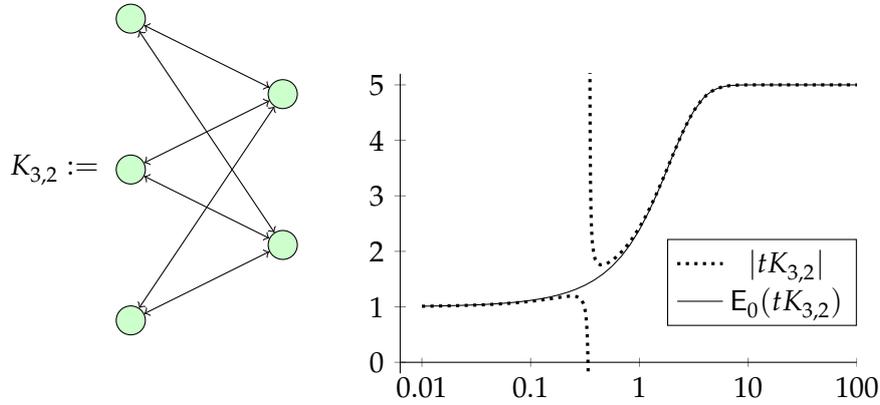
\begin{figure*}[th]
\begin{center}   
\beginpgfgraphicnamed{naughtyexampleA}
\begin{tikzpicture}[node distance = 4cm, auto,baseline=-3em]
    \node (K) at (-1,2) {$K_{3,2}:=$};
 \node [point] (A) at (0,0) {};
    \node [point] (B) at (0,2) {};
    \node [point] (C) at (0,4) {};
    \node [point] (D) at (2,1) {};
    \node [point] (E) at (2,3) {};
  \path [line,<->] (A) -- node[above,pos=0.5] {}  (D);
  \path [line,<->] (A) -- node[above,pos=0.5] {}  (E);
  \path [line,<->] (B) -- node[above,pos=0.5] {}  (D);
  \path [line,<->] (B) -- node[above,pos=0.5] {}  (E);
  \path [line,<->] (C) -- node[above,pos=0.5] {}  (D);
  \path [line,<->] (C) -- node[above,pos=0.5] {}  (E);
\end{tikzpicture}
\endpgfgraphicnamed
\qquad
\beginpgfgraphicnamed{naughtyexample}
\begin{tikzpicture}
\begin{axis}[
width = 0.6\textwidth,
axis x line=center, axis y line = left,
xmin=-2.2,ymin=-0.2, ymax=5.2, xmax=2,
xtick={-2,...,2}, xticklabels={0.01,0.1,$1$,10 ,100},
ytick={0,1,2,3,4,5}, yticklabels={0,1,2,3,4,5},
x axis line style={style = -},y axis line style={style = -},
xlabel=$t$,
yscale=0.8,
legend style={at={(1,0.3)},anchor=south east}
]
\addplot[mark=none,black, very thick, dotted] file {BlockOfDotsMagBadFivePointsI.dat};
\addlegendentry{$|tK_{3,2}|$};
{\addplot[mark=none,black] file {BlockOfDotsE0BadFivePoints.dat};}
\addlegendentry{$\RMS( tK_{3,2})$};
\addplot[mark=none,black, very thick, dotted] 
file {BlockOfDotsMagBadFivePointsII.dat};
\end{axis}
\end{tikzpicture}
\endpgfgraphicnamed
\end{center}
\caption{A five-point space $K_{3,2}$ with the plot of the singularity in its magnitude profile together with its spread profile.}
\label{Figure:BadMagnitude}
\end{figure*}

It is straightforward to generate examples of metric spaces with many points on the magnitude profile not defined, for instance you can use maple to take a random graph with say $100$ vertices and an expected valency of $10$ at each vertex.

We summarize here some of the good properties that the spread has when compared with the magnitude.
\begin{itemize}
\item The spread $\RMS$ is defined for all metric spaces.
\item As an $N$-point space is scaled up, the spread $E_0$ increases from $1$ to $N$.
\item It is much easier to calculate $\RMS(X)$ than $|X|$.
\end{itemize}

%

\subsection{Trees with the same magnitude}
A further class of metric spaces to consider is that of trees.  Given a tree, that is a graph with no cycles, we get a metric space consisting of the set of vertices and the edge-length.  We can generalize an example of Leinster~\cite[Example~2.3.5]{Leinster:Magnitude} to show that all trees with the same number of vertices have the same magnitude.
\begin{thm}
Suppose that  $T_N$ is a tree with $N$ vertices for $N\ge 1$, then the magnitude function is given by
  \[|tT_N|=\frac {N(e^{t}-1)+2}{e^{t}+1}.\]
\end{thm}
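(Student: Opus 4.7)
The plan is to exhibit the unique weighting on $tT_N$ explicitly and sum it. Checking small cases (the path and the star on $3$ or $4$ vertices) suggests that the weighting depends only on the degree of each vertex, specifically
\[ w_v := \frac{e^t + 1 - \deg v}{e^t + 1}. \]
Granting this, summing over all vertices and using the tree identity $\sum_v \deg v = 2(N-1)$ gives
\[ \sum_v w_v = \frac{N(e^t+1) - 2(N-1)}{e^t+1} = \frac{N(e^t-1)+2}{e^t+1}, \]
which is exactly the claimed magnitude. So the whole task reduces to verifying the weighting equation $\sum_v w_v e^{-td(v,y)} = 1$ for every vertex $y$.

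Writing $S := \sum_v e^{-td(v,y)}$ and clearing the factor $e^t+1$, the weighting equation is equivalent to the identity
\[ \sum_v \deg(v)\, e^{-t d(v,y)} = (e^t+1)(S-1). \]
I would prove this by rooting the tree at $y$. Each non-root vertex $v$ corresponds bijectively to a unique parent edge, and along that edge $d(v,y) = d(p(v),y) + t$. Regrouping $\sum_v \deg(v)\, e^{-td(v,y)}$ as a sum over edges and then over their two endpoints gives
\[ \sum_v \deg(v)\, e^{-td(v,y)} = \sum_{v\neq y} \bigl(e^{-td(v,y)} + e^{-t(d(v,y)-t)}\bigr) = (1+e^t) \sum_{v\neq y} e^{-td(v,y)}, \]
which is $(e^t+1)(S-1)$, as required.

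An alternative I would mention briefly is induction on $N$ via Leinster's inclusion–exclusion for magnitude, applied to the decomposition $tT_N = tT_{N-1} \cup tE$ along a pendant edge $E$; the geodesic hypothesis holds because in a tree the leaf's neighbour lies on every geodesic from $T_{N-1}$ to the leaf, so $|tT_N| = |tT_{N-1}| + |tE| - 1$, and plugging in $|tE| = 2e^t/(e^t+1)$ unpacks to the formula.

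The only non-mechanical step is guessing the degree-based form of $w_v$; once that is written down, verification is the one-line tree identity above, and the magnitude formula falls out of counting edges.
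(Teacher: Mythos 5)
Your proof is correct, but your primary route is genuinely different from the paper's. The paper proves the formula by induction on $N$: it picks a leaf $v$, sets $A = T_N\setminus v$ and $B$ the pendant edge, and applies Leinster's inclusion--exclusion result \cite[Corollary~2.3.3]{Leinster:Magnitude} to get $|tT_N| = |tA| + |tB| - 1 = |tA| + \frac{e^t-1}{e^t+1}$ --- this is precisely the ``alternative'' you sketch in your penultimate paragraph, so you have in effect given both proofs. Your main argument instead exhibits the explicit weighting $w_v = \frac{e^t+1-\deg v}{e^t+1}$, verifies the weighting equation by regrouping $\sum_v \deg(v)e^{-t\dd(v,y)}$ as a sum over edges of the tree rooted at $y$ (each edge contributing a factor $1+e^t$ times the similarity of its child endpoint), and then sums the weights using $\sum_v \deg v = 2(N-1)$. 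Both are complete; the trade-off is that the induction is shorter and needs no guessing, while the explicit weighting carries more information --- for instance it shows at a glance that a vertex has negative weight exactly when $\deg v > e^t+1$, which is the fact the paper needs later when discussing the corona $C_N$ (negative central weight for $N\ge 5$, and for $t<\ln 2$ when $N=4$). One cosmetic wobble: you write $\dd(v,y)=\dd(p(v),y)+t$ and $e^{-t(\dd(v,y)-t)}$ while elsewhere using $e^{-t\dd(v,y)}$, mixing the scaled metric with the graph distance; with $\dd$ the graph distance the parent's term is $e^{-t(\dd(v,y)-1)} = e^{t}e^{-t\dd(v,y)}$, which is what your final identity actually uses, so the computation stands.
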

\begin{proof}
Observe that if $N=1$ then $|tT_1|=1$ and the result holds.  Suppose that $N>1$.  Pick a leaf $v$, i.e.~a univalent vertex, of the tree $T_N$.  Let $A$ be the metric space $T_N\setminus v$ and let $B$ be the submetric space of $T_N$ consisting of $v$ and its adjacent vertex.  Then by~\cite[Corollary~2.3.3]{Leinster:Magnitude} we have
  \begin{align*}
  |tT_N|&=|tA|+|tB|-1=|tA|+\frac{2}{1+e^{-t}}-1
  \\
  &=
  |tA|+\frac{e^{t}-1}{e^{t}+1},
  \end{align*}
and as $A$ is a tree with $N-1$ vertices the result follows by induction.
\end{proof}

Let's look in particular at two extreme examples; these are pictured in Figure~\ref{Fig:LinearAndCorona}.  On the one hand we have $L_N$ the \defn{linear tree} with $N$ vertices; on the other hand we have $C_N$ the \defn{corona} with $N$ vertices, that is the tree with one ``central'' vertex which has an edge to each of the other vertices, and there are no other edges.  The corona $C_N$ can be thought of as the complete bipartite graph $K_{1,N-1}$.  Both of these $N$-trees give positive-definite metric spaces.  By the above theorem these two spaces have the same magnitude function.
However, they have various different properties.  For instance, their diameters are distinct, $\diam(L_N)=N-1$ and $\diam(C_N)=2$; and $tL_N$ always has a positive weighting, whereas $C_N$ has a negative weight on the central point if $N\ge 5$ (and $tC_4$ has a negative weight on the central point if $t<\ln(2)$).

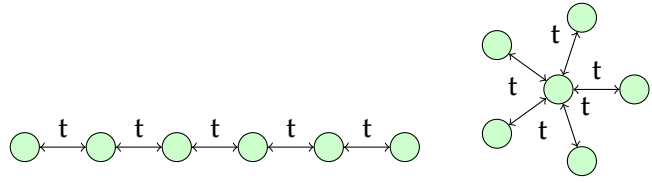
\begin{figure}
\begin{center}   
\beginpgfgraphicnamed{LinearTree}
\begin{tikzpicture}[node distance = 4cm, auto,baseline=-1em]
 \node [point] (A) at (0,0) {};
    \node [point] (B) at (1,0) {};
    \node [point] (C) at (2,0) {};
    \node [point] (D) at (3,0) {};
    \node [point] (E) at (4,0) {};
    \node [point] (F) at (5,0) {};
  \path [line,<->] (A) -- node[above,pos=0.5] {t}  (B);
  \path [line,<->] (B) -- node[above,pos=0.5]  {t} (C);
  \path [line,<->] (C) -- node[above,pos=0.5]  {t} (D);
  \path [line,<->] (D) -- node[above,pos=0.5]  {t} (E);
  \path [line,<->] (E) -- node[above,pos=0.5]  {t} (F);
\end{tikzpicture}
\endpgfgraphicnamed
\qquad
\beginpgfgraphicnamed{Corona}
\begin{tikzpicture}[node distance = 4cm, auto,baseline=-2em]
 \node [point] (M) at (0,0) {};
    \node [point] (N) at (0:1) {};
    \node [point] (O) at (72:1) {};
    \node [point] (P) at (144:1) {};
    \node [point] (R) at (-144:1) {};
    \node [point] (S) at (-72:1) {};
  \path [line,<->] (M) -- node {t}  (N);
  \path [line,<->] (M) -- node {t}  (O);
  \path [line,<->] (M) -- node{t}  (P);
  \path [line,<->] (M) -- node{t}  (S);
  \path [line,<->] (M) -- node {t}  (R);
\end{tikzpicture}
\endpgfgraphicnamed
\caption{The linear tree $tL_{6}$ and the corona $tC_6$ with six points.}
\label{Fig:LinearAndCorona}
\end{center}
\end{figure}

The spread distinguishes these spaces.  An easy calculation gives the following.
 \begin{align*}
   \RMS(tL_N)&=\sum_{i=1}^N \frac{e^t-1}{1+e^t-e^{-t(i-1)}-e^{-t(N-i)}}
   \\
   \RMS(tC_N)&=\frac{1}{1+(N-1)e^{-t}} +  \frac{N-1}{1+e^{-t}+(N-2)e^{-2t}}
 \end{align*}
 It is not too hard to calculate the maximum diversity function either.  The linear tree is positive definite and has a positive weighting, therefore the maximum diversity is precisely the magnitude.  The corona does not always have a positive weighting, and one finds that the central point needs to be `switched off' when the corona is scaled down sufficiently; this gives the following maximum diversity function.
  \[
    |tC_N|_{+}=
     \begin{cases}
      \dfrac {N(e^{t}-1)+2}{e^{t}+1}&t\ge\ln(N-2)\\
      \dfrac {N-1}{1+(N-2)e^{-2t}}&t<\ln(N-2)
     \end{cases}
  \]
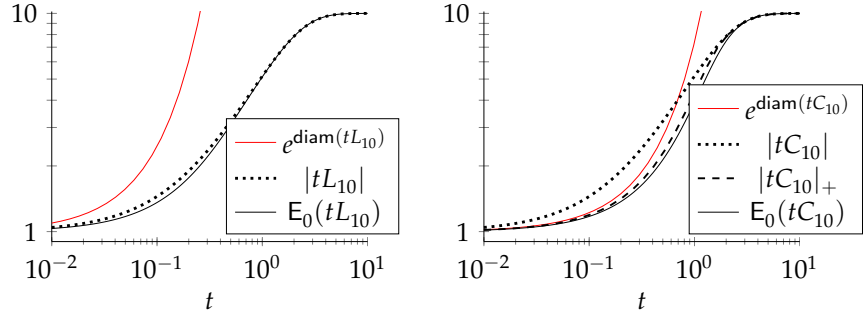
\begin{figure*}
\begin{center}
\beginpgfgraphicnamed{TreeGraphI}
\begin{tikzpicture}[scale=0.9]
\begin{loglogaxis}[
width = 0.49\textwidth,
axis x line=bottom, axis y line = left,
xmin=0.01,ymin=0.9, ymax=10.2, xmax=10,
ytick={1,...,10}, yticklabels={1,,,,,,,,,10},
x axis line style={style = -},y axis line style={style = -},
xlabel={$t$}, 
yscale=0.9,
legend style={at={(1.1,0.03)},anchor=south east}
]
\addplot[mark=none,red,very thin] expression [domain=0.01:1.2]{exp(9*x)};
\addlegendentry{$e^{\diam(tL_{10})}$};
\addplot[mark=none,black,dotted, very thick] [samples=100,domain=0.01:10] expression {(2+10*(exp(x)-1))/(exp(x)+1)};
\addlegendentry{$|tL_{10}|$};
\addplot[mark=none,black] file {E0LinearTree10.dat};
\addlegendentry{$\RMS(tL_{10})$};
\end{loglogaxis}
\end{tikzpicture}
\endpgfgraphicnamed%
\quad%
\beginpgfgraphicnamed{TreeGraphII}
\begin{tikzpicture}[scale=0.9]
\begin{loglogaxis}[
width = 0.49\textwidth,
axis x line=bottom, axis y line = left,
xmin=0.01,ymin=0.9, ymax=10.2, xmax=10,
ytick={1,...,10}, yticklabels={1,,,,,,,,,10},
x axis line style={style = -},y axis line style={style = -},
xlabel={$t$}, 
yscale=0.9,
legend style={at={(1.2,0.03)},anchor=south east}
]
\addplot[mark=none,red,very thin] expression [domain=0.01:1.2]{exp(2*x)};
\addlegendentry{$e^{\diam(tC_{10})}$};
\addplot[mark=none,black,dotted, very thick] [samples=100,domain=0.01:10] expression {(2+10*(exp(x)-1))/(exp(x)+1)};
\addlegendentry{$|tC_{10}|$};
\addplot[mark=none,black,dashed,thick] [samples=24,domain=0.01:2.08] expression {(9)/(1+(8)*exp(-2*(x)))};
\addlegendentry{$|tC_{10}|_{+}$};
\addplot[mark=none,black] expression [samples=100,domain=0.01:10] {1/(1+(10-1)*exp(-x))+ (10-1)/(1+exp(-x)+(10-2)*exp(-2*(x)))};
\addlegendentry{$\RMS(tC_{10})$};
\addplot[mark=none,black,dashed,thick] [samples=6,domain=2.08:10] expression {(2+10*(exp(x)-1))/(exp(x)+1)};
\end{loglogaxis}
\end{tikzpicture}
\endpgfgraphicnamed
\end{center}
\caption{Profiles for the linear tree $L_{10}$ and the corona $C_{10}$ with $10$ points.  The magnitude profile --- the dotted line --- is the same for both spaces.  The exponential in the diameter bounds the maximum diversity and the spread.}
\label{Figure:Corona}
\end{figure*}

The linear tree and the corona with $10$ points are compared in Figure~\ref{Figure:Corona}.  Whilst they have the same magnitude they clearly do not have the same maximum diversity nor spread: the linear tree has greater spread than the corona.  The magnitude of both spaces grows essentially linearly to start with (see Section~\ref{Section:GrowthRate}), which does not reflect the compact nature oft the corona.  The spread (and maximum diversity) grows linearly for the linear tree and exponentially for the corona, reflecting the geometry of these spaces somewhat more.

\section{Generalization to non-finite metric spaces}
\label{Section:MeasureSpaces}
The spread of a finite metric space was defined by using the canonical uniform probability measure on the underlying finite set.  The definition generalizes immediately to any metric space equipped with a finite mass measure.  If $(X,\dd)$ is a metric space equipped with a measure $\mu$ such that $\mu(X)<\infty$, then we can define the spread of $X$ by
 \[\RMS(X):=\int_{x\in X} \frac{\mathrm{d}\mu(x)}{\int_{y\in X} e^{-\dd(x,y)} \,\mathrm{d}\mu(y)}.\]
 This is really the spread with respect to the associated probability measure $\mu/\mu(X)$, but the two factors of $\mu(X)$ cancel in the numerator and denominator.  For $\qRMS(X)$ with $q>0$ the total mass $\mu(X)$  makes an appearance in the definition.
 
 We can now look at the following examples: the line interval with the Lebesgue measure; the $n$-sphere with its intrinsic metric and standard measure; and, asymptotically, any compact Riemannian manifold.

\subsection{The closed line interval}
We can quite straightforwardly calculate the spread of the length $\ell$ line interval $L_\ell$ equipped with the standard Lebesgue measure.
\begin{thm}
  We have 
    \[\RMS(L_\ell) = \frac{\arctanh(\sqrt{1-e^{-\ell}})}{\sqrt{1-e^{-\ell}}} ,\]
   and asymptotically, as $\ell\to \infty$,
   \[\RMS(L_\ell)-(\ell/2 +\ln(2))\to 0.\]
\end{thm}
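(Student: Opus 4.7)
The plan is to compute the spread by direct integration, obtain a closed form via a Weierstrass-type substitution, and then read off the asymptotics.

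First I would evaluate the inner integral. By splitting the absolute value at $y=x$,
\[\int_0^\ell e^{-|x-y|}\,\dd y = (1-e^{-x})+(1-e^{x-\ell}) = 2 - e^{-x} - e^{x-\ell}.\]
The key observation is that this is symmetric about $x=\ell/2$: factoring out $e^{-\ell/2}$ gives
\[2 - e^{-x} - e^{x-\ell} = 2\bigl(1 - e^{-\ell/2}\cosh(x-\ell/2)\bigr).\]
So, after the shift $u = x - \ell/2$, the spread is
\[\RMS(L_\ell) = \int_{-\ell/2}^{\ell/2}\frac{\dd u}{2\bigl(1 - e^{-\ell/2}\cosh u\bigr)}.\]

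Next I would apply the substitution $v = \tanh(u/2)$, for which $\dd u = \frac{2\,\dd v}{1-v^2}$ and $\cosh u = \frac{1+v^2}{1-v^2}$. A short calculation cancels the $1-v^2$ factor and produces
\[\RMS(L_\ell) = \int_{-\tanh(\ell/4)}^{\tanh(\ell/4)}\frac{\dd v}{A - B v^2}, \qquad A:=1-e^{-\ell/2},\ B:=1+e^{-\ell/2}.\]
Using the standard antiderivative $\int \dd v/(A-Bv^2) = \arctanh(v\sqrt{B/A})/\sqrt{AB}$ and noting that $\sqrt{AB}=\sqrt{1-e^{-\ell}}$ and $\tanh(\ell/4)=(1-e^{-\ell/2})/(1+e^{-\ell/2})$, one finds
\[\RMS(L_\ell) = \frac{2}{\sqrt{1-e^{-\ell}}}\arctanh\!\left(\sqrt{\tfrac{1-e^{-\ell/2}}{1+e^{-\ell/2}}}\right).\]
The only slightly unobvious step is collapsing the factor of $2$: by the double-angle identity $2\arctanh(x)=\arctanh\bigl(2x/(1+x^2)\bigr)$ applied to $x=\sqrt{(1-e^{-\ell/2})/(1+e^{-\ell/2})}$, the argument becomes $\sqrt{(1-e^{-\ell/2})(1+e^{-\ell/2})}=\sqrt{1-e^{-\ell}}$, yielding the claimed closed form.

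For the asymptotics, write $s=\sqrt{1-e^{-\ell}}$, so that $1-s^2 = e^{-\ell}$. Then
\[\arctanh(s) = \tfrac12\ln\tfrac{1+s}{1-s} = \tfrac12\ln\tfrac{(1+s)^2}{1-s^2} = \tfrac{\ell}{2} + \ln(1+s),\]
and since $1-s = e^{-\ell}/(1+s) = O(e^{-\ell})$, we have $\ln(1+s) = \ln 2 + O(e^{-\ell})$ and $1/s = 1 + O(e^{-\ell})$. Multiplying,
\[\RMS(L_\ell) = \frac{\arctanh(s)}{s} = \tfrac{\ell}{2} + \ln 2 + O(\ell\, e^{-\ell}),\]
which tends to $0$ after subtracting $\ell/2 + \ln 2$. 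The main obstacle is just bookkeeping through the substitution chain and spotting the double-angle identity that produces the tidy closed form stated in the theorem; the asymptotic step is then essentially immediate from the identity $\arctanh(s) = \ell/2 + \ln(1+s)$.
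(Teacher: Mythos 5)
Your argument is correct, and the closed form and asymptotics both check out. The route you take through the integral is genuinely different from the paper's: after computing the same inner integral $2-e^{-x}-e^{-(\ell-x)}$, the paper multiplies the integrand by $e^{-x}/e^{-x}$ and completes the square in $e^{-x}$, i.e.\ it effectively substitutes $w=1-e^{-x}$ to land directly on $\int_0^{1-e^{-\ell}}\frac{\dd w}{(1-e^{-\ell})-w^2}$, whose antiderivative immediately yields $\arctanh(\sqrt{1-e^{-\ell}})/\sqrt{1-e^{-\ell}}$ with no further identities needed. You instead exploit the symmetry about $x=\ell/2$ and run the standard Weierstrass substitution $v=\tanh(u/2)$, which is arguably the more systematic choice for a rational function of $\cosh$, but it costs you the extra double-angle step $2\arctanh(x)=\arctanh\bigl(2x/(1+x^2)\bigr)$ to collapse the answer into the stated form (a step you execute correctly: $2x/(1+x^2)$ does simplify to $\sqrt{1-e^{-\ell}}$). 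The paper's substitution is shorter and avoids that identity; yours is easier to find without foresight. Your asymptotic argument is essentially identical to the paper's, both resting on $\arctanh(s)=\ln(1+s)-\tfrac12\ln(1-s^2)=\ell/2+\ln(1+s)$ and the exponential decay of $1-s$; your version with the explicit $O(\ell e^{-\ell})$ error is if anything slightly more quantitative than the paper's.
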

\makeatletter
\newcommand{\vast}{\bBigg@{3}}
\newcommand{\Vast}{\bBigg@{5}}
\makeatother

\begin{proof}
  \newcommand{\ee}[1]{e^{#1}}
  This is just a case of calculating the integral.  First observe that for $x\in [0,\ell]$,
  \begin{align*}
    \int_{y \in L_\ell} \ee{-\dd(x,y)}dy
    &=\int_{y =0}^{\ell} \ee{-\left|x-y\right|} dy
    =     \int_{y =0}^{x} \ee{-x+y} dy +\int_{y =x}^{\ell} \ee{-y+x} dy
    \\
    &=  \left[\ee{-x+y} \right]_{y =0}^{x}  - \left[\ee{-y+x}\right]_{y =x}^{\ell}
    = 2- \left(\ee{-x}+\ee{-(\ell-x)}\right).
   \end{align*} 
   Thus
   \begin{align*}
     \RMS(L_\ell)
     &=
     \int_{x=0}^\ell \frac{dx}{2- \left(\ee{-x}+\ee{-(\ell-x)}\right)}
     =
     \int_{x=0}^\ell \frac{\ee{-x}dx}{2\ee{-x}- \left(\ee{-2x}+\ee{-\ell}\right)}
     \\&=
     \int_{x=0}^\ell \frac{\ee{-x}dx}{(1-\ee{-\ell})- (1-\ee{-x})^2}
     =
     \vast[\frac{\arctanh\Bigl(\frac{1-\ee{-x}}{\sqrt{1-e^{-\ell}}}\Bigr)}{\sqrt{1-e^{-\ell}}}
       \vast]_{x=0}^\ell
     \\&=
     \frac{\arctanh(\sqrt{1-e^{-\ell}})}{\sqrt{1-e^{-\ell}}} .
   \end{align*}
   Now to consider the asymptotic behaviour as $\ell\to \infty$, observe
   \begin{align*}
       \arctanh(z)&=\tfrac12 \ln\left(\frac{1+z}{1-z} \right)
       =
       \tfrac12 \ln\left(\frac{(1+z)^2}{1-z^2}\right)
       \\
       &=
       \ln(1+z)-\tfrac12\ln(1-z^2).
   \end{align*}
   Thus
   \begin{align*}
   \RMS(L_\ell)&=\frac{\ln(1+\sqrt{1-\ee{-\ell}})-\tfrac12\ln(1-(1-\ee{-\ell}))}{\sqrt{1-\ee{-\ell}}}
   \\
   &=\frac{\ln(1+\sqrt{1-\ee{-\ell}})+\frac{\ell}{2}}{\sqrt{1-\ee{-\ell}}},
   \end{align*}
   whence, as $1-\sqrt{1-\ee{-\ell}}$ decays exponentially to $1$, 
   \[\RMS(L_\ell)-(\ell/2 +\ln(2))\to 0\qquad\text{as }\ell\to\infty\]
   as required.
\end{proof}
This result should be compared with the magnitude for the interval of length $\ell>0$~\cite{LeinsterWillerton:AsymptoticMagnitude}:
  \[|L_\ell|= \ell/2 +1.\]
So asymptotically the magnitude and the spread of the interval have the same leading order term but different sub-leading terms.

As an aside, we can easily calculate the integral theoretic versions of $\RMSq2$ and $\RMSq\infty$ for the interval.  Again, asymptotically these have the the same leading order terms, but different sub-leading order terms.
\begin{thm}  For $\ell>0$ we have the following results for the length $\ell$ interval.
\begin{enumerate}
  \item $\RMSq2(L_\ell)= \frac{\ell^2}{2\ell-2(1-e^{-\ell})}$.
  \item $\RMSq2(L_\ell)-(\ell/2 +1/2)\to 0 $ as $\ell\to \infty$.
  \item $\RMSq\infty(L_\ell)= \frac{\ell}{2(1-e^{-\ell/2})}$.
  \item $\RMSq\infty(L_\ell)-\ell/2 \to 0 $ as $\ell\to \infty$.  
\end{enumerate} 
\end{thm}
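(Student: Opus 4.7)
The plan is to apply the integral-theoretic analogues of the $q=2$ and $q=\infty$ formulas from Table~\ref{Table:FormulasForEq}, reusing the inner integral
\[\int_{y=0}^{\ell} e^{-|x-y|}\,dy = 2 - \bigl(e^{-x} + e^{-(\ell-x)}\bigr)\]
that was already computed in the proof of the previous theorem. With $\mu(L_\ell)=\ell$, the $q=2$ formula reads
\[\RMSq{2}(L_\ell) = \frac{\ell^{2}}{\displaystyle\int_{0}^{\ell}\!\int_{0}^{\ell} e^{-|x-y|}\,dx\,dy},\]
and the continuous analogue of the $q=\infty$ formula is
\[\RMSq{\infty}(L_\ell) = \inf_{x\in L_\ell} \frac{\ell}{\displaystyle\int_{0}^{\ell} e^{-|x-y|}\,dy}.\]

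For part~(1), I would substitute the inner integral and evaluate the remaining outer integral term-by-term: each of $e^{-x}$ and $e^{-(\ell-x)}$ integrates to $1-e^{-\ell}$ over $[0,\ell]$, so the denominator is $2\ell - 2(1-e^{-\ell})$, giving the claimed formula. For part~(3), minimising $\ell/(2-e^{-x}-e^{-(\ell-x)})$ is equivalent to maximising $x\mapsto 2-e^{-x}-e^{-(\ell-x)}$; the derivative $e^{-x}-e^{-(\ell-x)}$ vanishes only at the midpoint $x=\ell/2$, and by symmetry (or a second-derivative check) this is the unique maximum, giving the denominator $2(1-e^{-\ell/2})$.

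The asymptotic parts (2) and (4) are then algebraic. For (2), subtracting $(\ell+1)/2$ from the exact expression and clearing denominators yields
\[\RMSq{2}(L_\ell) - \tfrac{\ell}{2} - \tfrac{1}{2} = \frac{1 - (\ell+1)e^{-\ell}}{2\ell - 2 + 2e^{-\ell}},\]
whose numerator is bounded and denominator grows linearly, so the difference tends to $0$. For (4), a direct manipulation gives
\[\RMSq{\infty}(L_\ell) - \tfrac{\ell}{2} = \frac{\ell\,e^{-\ell/2}}{2(1-e^{-\ell/2})},\]
which tends to $0$ since $\ell e^{-\ell/2}\to 0$.

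There is no real obstacle here: both exact formulas reduce to one-line consequences of the inner integral from the previous proof, and the asymptotics fall out by subtracting and simplifying. The only step that is not literal bookkeeping is the identification of the minimising point $x=\ell/2$ in part~(3), and this is forced by the symmetry of $L_\ell$ about its midpoint.
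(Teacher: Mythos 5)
Your proposal is correct and follows essentially the same route as the paper: both use the integral-theoretic $q=2$ and $q=\infty$ formulas together with the inner integral $2-e^{-x}-e^{-(\ell-x)}$ computed in the preceding proof, with the infimum in part (3) located at the midpoint by symmetry. You supply slightly more detail than the paper on the asymptotic parts (2) and (4), which the paper dismisses with ``this follows from the above,'' but the argument is the same.
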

\begin{proof}
  \begin{enumerate}
    \item This is obtained from the integral version of the order two spread:
    \begin{align*}
      \RMSq{2}(L_\ell)
      &=
      \frac{\bigl(\int_{x\in L_\ell} \mathrm{d}\mu(x)\bigr)^2}
      {\int_{x\in L_\ell} \int_{y\in L_\ell} e^{-\dd(x,y)} \,\mathrm{d}\mu(y)\,\mathrm{d}\mu(x)}
      \\
      &=
      \frac{\ell^2}{\int_{x=0}^{\ell}
              \left(2- \left(e^{-x}+e^{-(\ell-x)}\right)\right)\,\mathrm{d}\mu(x)}
              \\
      &=
      \frac{\ell^2}{2\ell-2(1-e^{-\ell})}.
    \end{align*}
    \item This follows from the above.
    \item This is obtained from the integral version of the order-infinity spread:
     \begin{align*}
      \RMSq{\infty}(L_\ell)
      &=\inf_{x\in L_\ell}
      \frac{\int_{y\in L_\ell} \mathrm{d}\mu(y)}
      {\int_{y\in L_\ell} e^{-\dd(x,y)} \,\mathrm{d}\mu(y)}
      =\inf_{x\in [0,\ell]}
      \frac{\ell}{
              2- (e^{-x}+e^{-(\ell-x)})}
              \\
      &=
      \frac{\ell}{2(1-e^{-\ell/2})}.
    \end{align*}
    \item This follows from the above.
    \end{enumerate}
\end{proof}

\subsection{Riemannian manifolds}
A Riemannian manifold is a smooth manifold equipped with a Riemannian metric, so in particular has an inner-product on each tangent space.  This structure gives rise to both a metric and a measure on the manifold.  The metric comes about because the Riemannian metric can be used to define a length for each rectifiable path in the manifold and the distance between two points is defined to be the infimum of the lengths of all the paths between the two points.  The measure comes about because the Riemannian metric can be used to define a volume form which leads to a density and a measure.  This means that every Riemannian manifold has a well-defined spread given by the formula
 \[\RMS(X):=\int_{x\in X} \frac{\mathrm{d}x}{\int_{y\in X} e^{-d(x,y)} \,\mathrm{d}y}.\]
In the case of homogeneous Riemannian manifolds this coincides with the formula for the magnitude that was examined in~\cite{Willerton:Homogeneous}.  In particular this tells us that the spread of $S^n_R$ the $n$-sphere of radius $R$ with its intrinsic metric, for $n\ge 1$ is given by
  \[\RMS(S^n_R)=\begin{cases}
\displaystyle\frac{2}{1+e^{-\pi R}} \prod_{i=1}^{n/2}\Bigl(\big(\tfrac{R}{2i-1}\big)^2 + 
1\Bigr) & \text{$n$ even}\\[1.5em]
\displaystyle\frac{\pi R}{ 1-e^{-\pi R}}\prod_{i=1}^{(n-1)/2}\Bigl(\bigl(\tfrac{R}{2i}\bigr)^2 + 
1\Bigr)\quad& 
\text{$n$ odd}.
\end{cases}\]
Moreover, the methods employed in~\cite{Willerton:Homogeneous} to calculate the asymptotics carry over essentially unchanged but work for \emph{all} closed Riemannian manifolds and not just homogeneous ones.
\begin{thm}
\label{Thm:RiemannianAsymptotics}
If $X$ is an $n$-dimensional Riemannian manifold (without boundary), with $\omega_n$ denoting the volume of the unit $n$-ball, $\Vol(X)$ denoting the volume of $X$ and $\TSC(X)$ 
denoting the total
scalar curvature of $X$ then as $X$ is scaled up the asymptotics of the 
spread are as follows:
  \[
   \RMS(tX)
    =
    \frac{1}{n!\,\omega_n}\Bigl(t^n\Vol(X) + \frac{n+1}{6}t^{n-2}\TSC(X)\Bigr.
     \Bigl.+O(t^{n-4})\Bigr)
    \quad \text{as }t\to\infty.
  \]
\end{thm}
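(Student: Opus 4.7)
The plan is to reduce the asymptotic expansion of $\RMS(tX)$ to a pointwise, uniform-in-$x$ asymptotic expansion of the local similarity integral
\[
\phi(x,t) := \int_{y\in X} e^{-t\,\dd(x,y)}\,\dvol(y),
\]
since the volume factors of $t^n$ cancel in $\RMS(tX) = \int_X \dvol(x)/\phi(x,t)$. Because $e^{-t\,\dd(x,y)}$ concentrates exponentially near $y=x$, I would work in geodesic polar coordinates centered at $x$. Compactness of $X$ guarantees a uniform positive lower bound on the injectivity radius, so the contribution to $\phi$ from outside a small geodesic ball of fixed radius is $O(e^{-ct})$ for some $c>0$, which can be absorbed in any polynomial error term.

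Inside that ball, I would use the standard geodesic-normal-coordinate expansion of the Riemannian volume density
\[
\dvol = r^{n-1}\bigl(1 - \tfrac{1}{6}\mathrm{Ric}_x(\theta,\theta)\,r^2 + O(r^3)\bigr)\,dr\,d\omega,
\]
where $d\omega$ is the standard measure on the unit sphere in $T_xX$. Odd powers of $\theta$ vanish upon averaging over $S^{n-1}$, and the key identity
\[
\int_{S^{n-1}} \mathrm{Ric}_x(\theta,\theta)\,d\omega = \omega_n\,\mathrm{Scal}(x)
\]
is what converts Ricci into scalar curvature. Combined with the gamma integral $\int_0^\infty e^{-tr}r^k\,dr = k!/t^{k+1}$, this gives
\[
\phi(x,t) = \frac{n!\,\omega_n}{t^n}\Bigl(1 - \frac{(n+1)\,\mathrm{Scal}(x)}{6\,t^2} + O(t^{-4})\Bigr).
\]
Inverting via the geometric series and integrating over $x\in X$ then produces $\Vol(X)$ from the leading term and $\TSC(X)$ from the $\mathrm{Scal}(x)$ correction, collecting powers of $t$ into the stated formula.

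The main obstacle is verifying that every error estimate is uniform in $x\in X$: one needs uniform control on the injectivity radius, on the curvature tensor and its derivatives, and hence on the remainder $O(r^3)$ in the volume-density expansion, in order to legitimately interchange $t\to\infty$ with the outer integration. This is precisely the step that was trivialized by homogeneity in \cite{Willerton:Homogeneous}, since there $\phi(x,t)$ does not depend on $x$ at all; once the bounds are verified uniformly on an arbitrary closed manifold (using compactness), the remainder of the argument transfers from that paper essentially verbatim, with the novelty being that $\mathrm{Scal}(x)$ now genuinely varies with $x$ and so must be integrated to yield the total scalar curvature.
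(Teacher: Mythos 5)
Your proposal is correct and follows essentially the same route as the paper, whose proof simply defers to the geodesic-normal-coordinate computation of Theorem~11 in \cite{Willerton:Homogeneous} with the scalar curvature now allowed to vary and hence integrated to give $\TSC(X)$. Your expansion of the volume density, the spherical average $\int_{S^{n-1}}\mathrm{Ric}_x(\theta,\theta)\,d\omega=\omega_n\,\mathrm{Scal}(x)$, and the gamma-integral bookkeeping reproduce that argument, and your emphasis on uniformity in $x$ via compactness is exactly the point the paper's one-line proof glosses over.
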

\begin{proof}  This is almost identical to the proof of Theorem~11 in~\cite{Willerton:Homogeneous} except that now the scalar curvature is not a constant and should be written as $\tau(x)$. 
\end{proof}
This simplifies in the case $n=2$ as follows.
\begin{cor}
For $\Sigma$ a Riemannian surface, the spread is 
asymptotically given in terms of the area and the Euler characteristic by
\[\RMS(t\Sigma)=\frac{\Area(\Sigma)}{2\pi}t^2+\chi(\Sigma)+O(t^{-2})\quad 
\text{as }t\to\infty.\]
\end{cor}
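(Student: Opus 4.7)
The plan is to specialize Theorem~\ref{Thm:RiemannianAsymptotics} to the case $n=2$ and then use Gauss--Bonnet to convert total scalar curvature into an Euler-characteristic term.

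First I would substitute $n=2$ into the formula. The constant $n!\,\omega_n$ becomes $2\cdot\pi=2\pi$, since $\omega_2=\pi$ is the area of the unit disc. The coefficient $(n+1)/6$ becomes $1/2$, and $\Vol(\Sigma)=\Area(\Sigma)$. The error term $O(t^{n-4})$ becomes $O(t^{-2})$. So the theorem gives directly
\begin{equation*}
\RMS(t\Sigma)=\frac{\Area(\Sigma)}{2\pi}t^2+\frac{\TSC(\Sigma)}{4\pi}+O(t^{-2}).
\end{equation*}

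Next I would identify the constant term with $\chi(\Sigma)$. In dimension two the scalar curvature $\tau$ at a point is twice the Gaussian curvature $K$, so the total scalar curvature satisfies $\TSC(\Sigma)=\int_\Sigma \tau\,\dvol=2\int_\Sigma K\,\dvol$. The Gauss--Bonnet theorem for a closed surface states $\int_\Sigma K\,\dvol=2\pi\chi(\Sigma)$, and combining these yields $\TSC(\Sigma)=4\pi\chi(\Sigma)$. Substituting this into the constant term gives $\TSC(\Sigma)/(4\pi)=\chi(\Sigma)$, which produces the claimed asymptotic expansion.

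There is no real obstacle here: everything reduces to a direct substitution into Theorem~\ref{Thm:RiemannianAsymptotics} together with a standard application of Gauss--Bonnet. The only thing to be careful about is the factor-of-two convention relating scalar curvature to Gaussian curvature in two dimensions, which is what makes the coefficients line up so that the constant term is exactly $\chi(\Sigma)$ rather than some rational multiple of it.
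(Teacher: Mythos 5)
Your proposal is correct and follows exactly the paper's route: substitute $n=2$ into Theorem~\ref{Thm:RiemannianAsymptotics} (using $2!\,\omega_2=2\pi$ and $\tfrac{n+1}{6}=\tfrac12$) and then apply Gauss--Bonnet in the form $\TSC(\Sigma)=4\pi\chi(\Sigma)$. The arithmetic checks out, and your care over the factor of two between scalar and Gaussian curvature is exactly the point that makes the constant term come out as $\chi(\Sigma)$.
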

\begin{proof}
  This follows from the theorem above as $\omega_2=\pi$ and the Gauss-Bonnet Theorem says that $\TSC(\Sigma)=4\pi\chi(\Sigma)$.
\end{proof}
\section{Dimension and fractals}
\label{Section:GrowthRate}
In this section we define the notion of spread dimension of a metric space which is the instantaneous growth rate of the spread of the space.  This notion of dimension is scale dependent.  For instance, we will see that a long, thin rectangular array of points can have spread dimension close to zero, one, or two, depending on the scale.  Then we look at the spread dimension of some finite approximations to simple fractals and see that the spread dimension is close to the Hausdorff dimension at some scales.  Finally we observe that Meckes has recently related the \emph{asymptotic magnitude} dimension of spaces to the Minkowski dimension, for the spaces considered here the Hausdorff and Minkowski dimensions are equal.

\subsection{Definition of spread dimension}
Now that we have a notion of size of a metric space, we can look at the growth rate of this size as a measure of the `dimension' of the space.  Typically one looks at the asymptotic growth rate as a measure of dimension, but it is interesting here to look at the instantaneous growth rate.  The size is very scale dependent in a non-obvious way so looking at how the growth rate varies is very interesting.  For a real-valued function $f$ defined on some subset of the reals, we can define the growth rate at $t$ by 
  \[(Gf)(t):=\frac{\mathrm{d}\ln(f(t))}{\mathrm{d}\ln(t)}.\]
For example, if $f(t)=t^n$ then $Gf(t)=n$.  Another way of writing this is as
  \[(Gf)(t):=\frac{t}{f(t)}\frac{\mathrm{d}f(t)}{\mathrm{d}t}.\]
The instantaneous growth rate is the gradient in a $\log$-$\log$ plot of the function.

We define $\Edim(X)$ the \defn{instantaneous spread dimension}, or just \defn{spread dimension}, of a metric space $X$ to be $Gf(1)$ where $f(t):=\RMS(tX)$, in other words,
  \begin{align*}
    \Edim(X)
    &:=
    \left.\frac{\mathrm{d}\ln(\RMS(tX))}{\mathrm{d}\ln(t)}\right|_{t=1}
    =
    \left.\frac{t}{\RMS(tX)}\frac{\mathrm{d}\RMS(tX)}{\mathrm{d}t}\right|_{t=1}.
  \end{align*}

It is then informative to look at examples of this instantaneous spread dimension as the space is scaled.   The following examples were calculated using \texttt{maple} on a processor with 16GB of RAM.

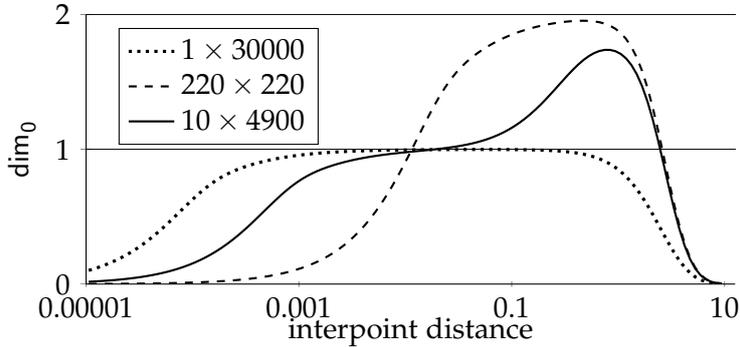
\begin{figure*}
\begin{center}
\beginpgfgraphicnamed{DimensionGraphI}
\begin{tikzpicture}
\begin{axis}[
width = 0.8\textwidth,
axis x line=bottom, axis y line = left,
xmin=-5,ymin=0, ymax=2, xmax=1.1,
xtick={-5,-3,-1,1,3}, xticklabels={$0.00001$,$0.001$,$0.1$,$10$,$1000$},
ytick={0,1,2}, yticklabels={0,1,2},
x axis line style={style = -},y axis line style={style = -},
xlabel={interpoint distance}, ylabel=$\Edim$,
yscale=0.5,
legend style={at={(0.05,1.9)},anchor=north west}
]
\addplot[mark=none,black,dotted,very thick] file {LineNoMatrixE0DerivN30000.dat};
\addlegendentry{$1\times 30000$};
\addplot[mark=none,black,dashed,thick] file {RectangleNotPlotE0Derivx220y220.dat};
\addlegendentry{$220\times 220$};
\addplot[mark=none,black,thick] file {RectangleNotPlotE0Derivx10y4900.dat};
\addlegendentry{$10\times 4900$};
\addplot[mark=none,very thin,black] expression {1};
\addplot[mark=none,very thin, black] expression {2};
\end{axis}
\end{tikzpicture}
\endpgfgraphicnamed
\end{center}
\caption{The spread dimension profiles for various rectangular grids.}
\label{Figure:RectangularGridDimensions}
\end{figure*}

\subsection{Rectangular grids}
As a first set of examples we can look at three types of rectangular grids with equally spaced points.  The spread-dimension profiles are shown in Figure~\ref{Figure:RectangularGridDimensions}.

Starting first with the grid of $1\times 30000$ points, or, in other words, a line of $30000$ points, we see that when the points are very close together the spread dimension is close to zero, reflecting the fact that the `line' at that scale is point-like.  As the line of points is scaled up, it looks more and more like a line, so when the interpoint distance is $0.01$ units, meaning the length is $300$ units, the spread dimension is close to one.  As the line of points is scaled up further and further, so that the interpoint distance is $10$ units, say, the point-like nature is apparent and the spread dimension drops to zero.

Considering the square grid of $220\times 220$ points, we see that this starts off looking like a point at small scales, with the spread dimension being close to zero, then as the square grid is scaled up to about $20$ units by $20$ units, with an interpoint distance of about $0.1$ units, it looks more like a genuine square and has an spread dimension of just under two.  Then as the square grid is scaled up further, the point-like nature is apparent and the spread dimension drops to zero.

The most interesting case shown is where we consider the rectangular grid of $10\times 4900$ points.  Again, at small scales the spread dimension is close to zero whilst the grid looks like a small point.  Then as it is scaled up there is a regime, around where the rectangle is of the order of $0.1$ units by $50$ units, where the space looks `line-like' and the dimension is approximately one.  As it is scaled up further to around $10$ units by $500$ units, the width is apparent and the spread dimension heads towards two.  Finally, as it is scaled up further, the point-like nature becomes apparent and the spread dimension descends to zero.

From this we deduce that the spread $\RMS$ must be measuring something geometric.

\subsection{Fractals}
We now look at the spread dimension of certain finite approximations to fractal sets in Euclidean space, namely to the ternary Cantor set, the Koch curve and the Sierpinski triangle.  We can look at the spread dimension profile and see that at certain scales the spread dimension is roughly the Hausdorff dimension of the corresponding fractal, indicating that spread is a reasonable measure of the size of these fractals, and, indeed, of these approximations to these fractals.

\begin{figure*}
\beginpgfgraphicnamed{DimensionGraphII}
\begin{tikzpicture}
\begin{axis}[
width = 0.47\textwidth,
axis x line=bottom, axis y line = left,
xmin=-3,ymin=0, ymax=2, xmax=6,
xtick={-5,-3,-1,1,3,5,7}, xticklabels={$10^{-5}$,$10^{-3}$,$10^{-1}$,$10^1$,$10^3$,$10^5$,$10^7$},
ytick={0,0.6309,1,2}, yticklabels={$0$,$\tfrac{\ln2}{\ln3}$,$1$,$2$},
x axis line style={style = -},y axis line style={style = -},
xlabel={length of Cantor set}, ylabel=$\Edim$,
legend style={at={(0.5,0.6)},anchor=south}
]
\addplot[mark=none,blue,dashed] file {CantorE0DerivDepth9N1024.dat};
\addlegendentry{$1024$ points};
\addplot[mark=none,red] file {CantorE0DerivDepth10N2048.dat};
\addlegendentry{$2048$ points};
\addplot[mark=none,black,very thin] expression[domain=-3:6] {0.6309};
\end{axis}
\end{tikzpicture}
\endpgfgraphicnamed
\beginpgfgraphicnamed{DimensionGraphIII}
\begin{tikzpicture}
\begin{axis}[
width = 0.47\textwidth,
axis x line=bottom, axis y line = left,
xmin=-0,ymin=0.619, ymax=0.641, xmax=5.1,
xtick={-5,-3,-1,1,3,5}, xticklabels={$10^{-5}$,$10^{-3}$,$10^{-1}$,$10^1$,$10^3$,$10^5$},
ytick={0.62,0.6309,0.64}, yticklabels={0.62,$\tfrac{\ln2}{\ln3}$,0.64},
x axis line style={style = -},y axis line style={style = -},
xlabel={length of Cantor set}, ylabel=$\Edim$,
legend style={at={(0.5,0.1)},anchor=south}
]
\addplot[mark=none,blue,dashed] file {CantorE0DerivDepth9N1024.dat};
\addlegendentry{$1024$ points};
\addplot[mark=none,red] file {CantorE0DerivDepth10N2048.dat};
\addlegendentry{$2048$ points};
\addplot[mark=none,black,very thin] expression {0.6309};
\end{axis}
\end{tikzpicture}
\endpgfgraphicnamed
\\
\beginpgfgraphicnamed{DimensionGraphIV}
\begin{tikzpicture}
\begin{axis}[
width = 0.48\textwidth,
axis x line=bottom, axis y line = left,
xmin=-3,ymin=0, ymax=2, xmax=4.5,
xtick={-5,-3,-1,1,3}, xticklabels={$0.00001$,$0.001$,$0.1$,$10$,$1000$},
ytick={0,1,1.262,2}, yticklabels={0,1,$\frac{\ln 4}{\ln 3}$,2},
x axis line style={style = -},y axis line style={style = -},
xlabel={width of Koch curve}, ylabel=$\Edim$,
legend style={at={(0.45,0.1)},anchor=south}
]
\addplot[mark=none,blue,dashed] file {KochNotPlotE0DerivDepth6N4097.dat};
\addlegendentry{$4097$ points};
\addplot[mark=none,red] file {KochNotPlotE0DerivDepth7N16385.dat};
\addlegendentry{$16385$ points};
\addplot[mark=none,black,very thin] expression {1.262};
\end{axis}
\end{tikzpicture}
\endpgfgraphicnamed
\beginpgfgraphicnamed{DimensionGraphV}
\begin{tikzpicture}
\begin{axis}[
width = 0.48\textwidth,
axis x line=bottom, axis y line = left,
xmin=-3,ymin=0, ymax=2, xmax=4,
xtick={-5,-3,-1,1,3}, xticklabels={$0.00001$,$0.001$,$0.1$,$10$,$1000$},
ytick={0,1,1.585,2}, yticklabels={0,1,$\frac{\ln 3}{\ln 2}$,2},
x axis line style={style = -},y axis line style={style = -},
xlabel={width of Sierpinski triangle}, ylabel=$\Edim$,
legend style={at={(0.45,0.1)},anchor=south}
]
%
\addplot[mark=none,blue,dashed] file {SierpinskiNotPlotE0DerivDepth8N9843.dat};
\addlegendentry{$9843$ points};
\addplot[mark=none,red] file {SierpinskiNotPlotE0DerivDepth9N29526.dat};
\addlegendentry{$29526$ points};
\addplot[mark=none,black,very thin] expression {1.585};
\end{axis}
\end{tikzpicture}
\endpgfgraphicnamed
\caption{The spread dimension profiles of finite approximations to certain fractals, compared to the Hausdorff dimensions of the fractals.}
\label{Fig:FractalDimensions}
\end{figure*}
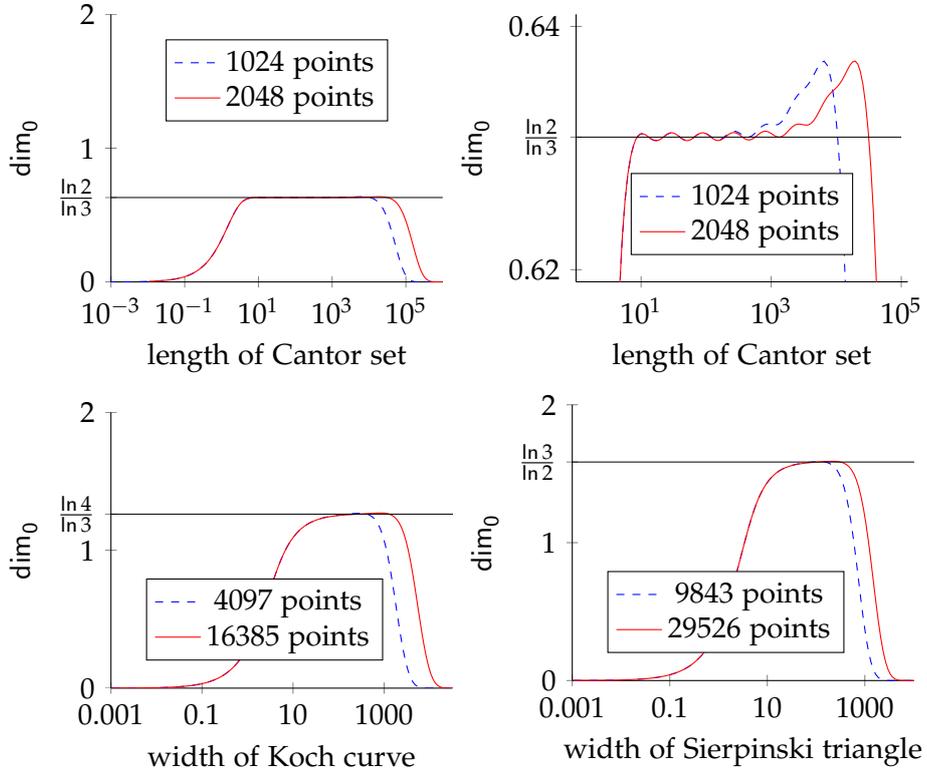

In the first case we look at the ternary Cantor set.  This is approximated by starting with two points a distance $\ell$ apart on a line.  We use the two contractions of the line by a factor of a third which respectively leave the two points fixed.  By applying these two contractions successively up to $10$ times, starting at the initial points, we obtain $2048$ points.  The spread dimension at various lengths $\ell$ can then be computed numerically.  In Figure~\ref{Fig:FractalDimensions} we see that at small scales the spread dimension is close to zero corresponding to the fact that the space looks like a point at those scales.  Similarly, at very large scales, the space looks like a collection of distant points and the spread dimension is again zero.  At intermediate scalings, roughly for $10<\ell<10000$ the spread dimension is roughly the Hausdorff dimension of the Cantor set, namely $\ln 2/ \ln 3$, indicating that the space looks more `Cantor set-like' at those scales.

The top-right picture in Figure~\ref{Fig:FractalDimensions} is an enlargement of the Cantor set profile, and shows that things are apparently more intriguing than one might guess.  At intermediate scales, the spread dimension seems to oscillate around the Hausdorff dimension, with the oscillations being of \emph{multiplicative} period $3$.  Such small oscillations were observed for the magnitude of the Cantor set in~\cite{LeinsterWillerton:AsymptoticMagnitude}.  I have no good explanation for these oscillations at the moment.

In the next case we look at the Koch curve.  Again, this is approximated by starting with a couple of points and iteratively applying one of four contractions, to obtain a finite metric space contained in the Koch curve.  The graph shows that as the approximation is scaled up from very small, the spread dimension increases to roughly the Hausdorff dimension, $\ln 4/\ln 3$, where it remains over a range of scales, before descending to zero as the approximating space is scaled up sufficiently so that its discrete, point-like nature is apparent.  

The final example of the Sierpinski triangle is generated in the same way using an iterated function system, and shows the same behaviour, namely, of having roughly the same spread dimension as its Hausdorff dimension at certain scales.

These examples should serve to show that there is something interesting going on which has yet to be examined fully.

\subsection{Asymptotic magnitude dimension and Minkowski dimension}
\label{section:MeckesMinkowski}
Here we have been considering the instantaneous spread dimension of spaces which is scale dependent.  You can also consider the asymptotic spread dimension which is scale \emph{independent}; for a space $X$  it is essentially the lim sup of the growth rate of $E_0(tX)$ as $t\to \infty$.  Following numerical and exact calculations in~\cite{Leinster:Magnitude,LeinsterWillerton:AsymptoticMagnitude, Willerton:Heuristics,Willerton:Homogeneous} Meckes showed~\cite{Meckes:MagnitudeEtc} that the asymptotic \emph{magnitude} of a space is defined if and only if the Minkowski dimension of the space is defined and in that case the two are equal.   (For the spaces considered above the Minkowski and Hausdorff dimensions agree.)  This further suggests that it is not far-fetched that the spread is encoding such geometric information.

\section*{Acknowledgements}
It is a pleasure to thank Tom Leinster and Mark Meckes for various helpful conversations, comments and terminological assistance.  Similarly I would like to thank the Centre de Recerca Matem\`atica at the Universitat Aut\`onoma de Barcelona where some of this work was carried out and where I had opportunity to talk about this work during the Exploratory Programme on the Mathematics of Biodiversity; I would also like to thank the participants of that programme for their input and enthusiasm.  Finally I thank Sam Marsh and Neil Dummigan for integral inspiration.

\end{document}